\numberwithin{equation}{section}
\theoremstyle{plain}
\newtheorem{Theorem}{Theorem}[section]
\newtheorem{Corollary}[Theorem]{Corollary}
\newtheorem{Lemma}[Theorem]{Lemma}
\newtheorem{Proposition}[Theorem]{Proposition}
\theoremstyle{definition}
\newtheorem{Example}[Theorem]{Example}
\theoremstyle{Remark}
\newtheorem{Remark}[Theorem]{Remark}
\def\ZZ{{\bf Z}}
\def\RR{{\mathbb R}}
\def\CC{{\mathbb C}}
\def\sym#1#2{\mbox{\rm Sym}_{#1}(#2)}
\def\O{{\mathcal O}}
\def\sym{{\mathcal S}}
\def\cl#1{{\mathcal #1}}
\def\VaVa{{\cl V}\kern-5pt {\cl V}}
\def\PP{{\mathbb P}}
\def\C{{{\mathbb C}}}
\def\F{{{\mathbb F}}}
\def\sym{{{\mathrm Sym}}}
\def\g{{{\mathfrak g}}}
\begin{document}
\title{The critical space for orthogonally invariant varieties}
\author{Giorgio Ottaviani\footnote{The author is member of Italian GNSAGA-INDAM. Partially supported by the
H2020-MSCA-ITN-2018 project POEMA.}}


\maketitle

\begin{small}\emph{  Dedicated to Bernd Sturmfels on the occasion of his
60-th birthday.}\end{small}

\begin{abstract}
Let $q$ be a nondegenerate quadratic form on $V$. Let 
$X\subset V$ be invariant for the action of a Lie group
$G$ contained in $SO(V,q)$.
For any $f\in V$ consider the function $d_f$ from $X$ to $C$ defined by $d_f(x)=q(f-x)$.
We show that the critical points of $d_f$ lie in the subspace orthogonal to ${\mathfrak g}\cdot f$, that we call critical space.
In particular any closest point to $f$ in $X$ lie in the critical space.
This construction applies to singular t-ples for tensors and to flag varieties and generalizes a previous result of Draisma, Tocino and the author. As an application, we compute the Euclidean Distance degree of a complete flag variety.
\end{abstract}

\section{Introduction and main result}

Let $V$ be a complex vector space equipped with a nondegenerate symmetric bilinear form $q\in\sym^2 V$,
identified in this paper with its associated quadratic form.
The orthogonal group $SO(V,q)=SO(V)$ consists of linear transformations of $V$ leaving $q$ invariant.
 Let $X\subset V$ be an algebraic variety defined over $\RR$, this includes the case when $X$ is the cone
 over a projective variety defined over $\RR$.
 We assume that $X$ is $G$-invariant for the action of a Lie group $G\subset SO(V)$.
In many cases of interest $X$ is $H$-invariant for a larger group $H$ and we can take $G=SO(V)\cap H$,
see \S 2 for the case of partially symmetric tensors.

We denote by $\g=T_eG$ the Lie algebra of $G$, where $e$ is the identity element, note that $\g\subset\mathfrak{so}(V) $. The tangent space to the orbit $G\cdot f$ at $f$ is $f+\g\cdot f$.
Denoting by $G_f=\{g\in G| g\cdot f=f\}$  the isotropy group of $f$, we have $\dim \g\cdot f=\dim \g-\dim G_f$.

We define for $f\in V$ the {\it critical space} of $f$ as the subspace

\begin{equation}\label{eq:defcrit}H_f:=\left(\g\cdot f\right)^\perp=\left\{v\in V|q(v,w)=0\quad\forall w\in\g\cdot f\right\}
\end{equation}

We remark that $\mathrm{codim\ }H_f=\dim \g\cdot f$, so we have  $\mathrm{codim\ }H_f\le\dim\g$
and the equality holds for general $f$ in many cases, but it cannot hold in the cases when $\dim\g\ge\dim V$ (this happens
when $V=\CC^a\otimes\CC^b\otimes\CC^c$ and $c$ is large, in the setting of \S 2.2). Consider the function $d_f\colon X\to\C$,
$d_f(x)=q(f-x,f-x)$, which, in the case $f$ is real,  extends the squared distance function from $f$ defined over $\RR$.

Note that at a critical point $x$ of $d_f$ we have $f-x\in\left(T_xX\right)^\perp$.

\begin{Lemma}\label{lem:gbasic}
If $g\in\g$ then $q(g\cdot x,y)=-q(x,g\cdot y)$, in particular $q(g\cdot x, x)=0$.
\end{Lemma}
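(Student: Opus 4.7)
The plan is to extract the lemma from the defining property of the orthogonal group already noted in the excerpt, namely $\mathfrak{g}\subset\mathfrak{so}(V,q)$. Recall that $SO(V,q)$ consists of linear maps $h$ with $q(hv,hw)=q(v,w)$ for all $v,w\in V$, and the standard infinitesimal characterization of its Lie algebra $\mathfrak{so}(V,q)$ is exactly the skew-symmetry relation $q(Av,w)+q(v,Aw)=0$. Once this identification is in place, the first assertion of the lemma is just the statement $g\in\mathfrak{so}(V,q)$, and the second assertion follows by polarization.

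Concretely, I would argue as follows. For $g\in\mathfrak{g}$, consider the one-parameter subgroup $t\mapsto \exp(tg)\in G\subset SO(V,q)$. By $G$-invariance of $q$, for every $x,y\in V$ and every $t$ we have
\begin{equation*}
q(\exp(tg)\cdot x,\exp(tg)\cdot y)=q(x,y).
\end{equation*}
Differentiating both sides with respect to $t$ at $t=0$, applying the Leibniz rule to the bilinear form $q$ and using $\frac{d}{dt}\big|_{t=0}\exp(tg)=g$, yields
\begin{equation*}
q(g\cdot x,y)+q(x,g\cdot y)=0,
\end{equation*}
which is the first claim. Setting $y=x$ then gives $2\,q(g\cdot x,x)=0$, hence $q(g\cdot x,x)=0$, which is the second claim.

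There is no real obstacle here, since the argument is the standard derivation of $\mathfrak{so}(V,q)$ as the Lie algebra of $SO(V,q)$; indeed one could simply cite the inclusion $\mathfrak{g}\subset\mathfrak{so}(V)$ recorded above. The only minor point to verify is that the one-parameter subgroup exponential makes sense in the complex-analytic category (as $V$ and $G$ are complex here), which it does. Alternatively, one can avoid exponentials altogether by taking any smooth curve $\gamma\colon(-\epsilon,\epsilon)\to G$ with $\gamma(0)=e$ and $\gamma'(0)=g$, differentiating $q(\gamma(t)\cdot x,\gamma(t)\cdot y)=q(x,y)$ at $t=0$, and obtaining the same identity.
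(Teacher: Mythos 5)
Your argument is correct and essentially identical to the paper's: the paper differentiates $q(g(t)\cdot x, g(t)\cdot y)$ along an arbitrary path $g(t)$ through the identity with $\dot g(0)=g$, which is precisely the "alternatively" variant you mention at the end; your main version with $\exp(tg)$ is just a specific choice of such a path. No gap.
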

\begin{proof} Let $g(t)\subset G$ be a path such that $g(0)=e$ and $\dot{g}(0)=g$.
Taking the derivative at $t=0$ of the constant function $q(g(t)\cdot x,g(t)\cdot y)$ the thesis follows.
\end{proof}

Our main result is the following Theorem. Its proof is quite simple, nevertheless we will see in the rest of the paper
it has some nontrivial consequences.

\begin{Theorem}\label{thm:main}
Let $X$ be $G$-invariant for the action of $G\subset SO(V)$.

\begin{enumerate}

\item{} The critical points of $d_f$ on $X$ lie in $H_f$. 

\item{} When $f$ is real, any closest point to $f$ in $X_{\RR}$ (with respect to $q$) belongs to $H_f$.

\item{} $f\in H_f$.
\end{enumerate}
\end{Theorem}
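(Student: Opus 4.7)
The plan is to dispatch the three parts in reverse order, exploiting the fact that Lemma \ref{lem:gbasic} already packages the infinitesimal invariance of $q$ into a single skew-symmetry identity.

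First I would prove part (3), since it is essentially a one-line consequence of Lemma \ref{lem:gbasic}: to show $f\in H_f$ I need $q(f,g\cdot f)=0$ for every $g\in\g$, and this is exactly the identity $q(g\cdot x,x)=0$ of the lemma applied with $x=f$.

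For part (1), let $x\in X$ be a critical point of $d_f$. The key observation is that $G$-invariance of $X$ forces $\g\cdot x\subset T_xX$: any path $g(t)\cdot x$ in the orbit lies in $X$, and its velocity at $t=0$ is $g\cdot x$. The critical point condition $f-x\in(T_xX)^\perp$ therefore yields
\begin{equation*}
q(f-x,g\cdot x)=0 \qquad \text{for all } g\in\g.
\end{equation*}
Expanding and using $q(x,g\cdot x)=0$ (Lemma \ref{lem:gbasic} with $y=x$), this reduces to $q(f,g\cdot x)=0$. Now I invoke the skew-symmetry of Lemma \ref{lem:gbasic} to swap the roles of the two arguments of $g$:
\begin{equation*}
0 = q(f, g\cdot x) = -q(g\cdot f, x),
\end{equation*}
so $x$ is orthogonal to every $g\cdot f$, i.e.\ $x\in H_f$.

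Finally, part (2) is a formal consequence of part (1): a closest real point $x\in X_\RR$ to the real vector $f$ is in particular a critical point of the real squared-distance function, hence a critical point of $d_f$ after base change to $\CC$, so part (1) applies. The only place one might worry is at singular points of $X$, but since $d_f$ attains its minimum there as well and the orbit direction $\g\cdot x$ still lies in the Zariski tangent cone (again by $G$-invariance), the same argument goes through. The main conceptual step, and the only non-bookkeeping one, is the algebraic manipulation that converts the orthogonality $f-x\perp \g\cdot x$ into the orthogonality $x\perp\g\cdot f$; I do not anticipate a real obstacle here because Lemma \ref{lem:gbasic} is precisely engineered for this swap.
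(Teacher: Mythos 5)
Your proposal is correct and follows essentially the same argument as the paper: both use the $G$-invariance of $X$ to get $\g\cdot x\subset T_xX$, the critical-point orthogonality $f-x\perp T_xX$, and the skew-symmetry from Lemma \ref{lem:gbasic} to swap $q(f,g\cdot x)$ into $-q(g\cdot f,x)$, with parts (2) and (3) handled identically. Your brief remark about singular points in part (2) is a small extra care the paper leaves implicit, but it does not change the substance of the argument.
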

\begin{proof}
Let $x$ be a critical point.
We need to prove $q(x-f,g\cdot f)=0$ $\forall g\in\g$.
We have
$q(g\cdot f,f)=0\quad\forall g\in\g$ from Lemma \ref{lem:gbasic}. So it is enough to show that 
$q(x,g\cdot f)=0$ $\forall g\in\g$.
The crucial remark is that since $X$ is $G$-invariant then $\g\cdot x\subset T_xX$.
Since $x$ is critical it follows the chain of equalities (the second and the third one by Lemma \ref{lem:gbasic}
$0=q(g\cdot x, x-f)=-q(g\cdot x,f)=q(x,g\cdot f)$, which proves (1).

(2) is an immediate consequence of (1).

(3) follows by $q(f,g\cdot f)=0$.

\end{proof}

A partial converse to Theorem \ref{thm:main} is the following. 
\begin{Theorem}\label{thm:mainconverse}
Let $X$ be $G$-invariant for the action of $G\subset SO(V)$. Let $x\in H_f\cap X$.
\begin{enumerate}
\item{}
  If the orbit $G\cdot x$ is dense in $X$ then $x$ is a critical point of $d_f$ restricted to $X$.
\item{} If $X$ is a cone, $x$ is not isotropic  and the orbit $G\cdot [x]$ is dense in $\PP X$ then 
there is $\lambda\in\C$ such that $\lambda x$ is a critical point of $d_f$ restricted to $X$.
\end{enumerate}
\end{Theorem}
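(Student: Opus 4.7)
The overall plan is to use the density hypothesis to identify the relevant tangent space with $\g\cdot x$ (together with the cone direction $\CC x$ in part (2)); once this is done, the critical condition $f-x\in(T_xX)^\perp$ reduces to orthogonalities that are handled directly by Lemma \ref{lem:gbasic} and the hypothesis $x\in H_f$, essentially by running the argument of Theorem \ref{thm:main} in reverse.

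For part (1), I would first note that an orbit of an algebraic group action is locally closed, so a dense orbit is open in $X$. Hence at any $x$ in the orbit one has $T_xX=T_x(G\cdot x)=\g\cdot x$, and to verify criticality it suffices to show $q(f-x,g\cdot x)=0$ for every $g\in\g$. The term $q(x,g\cdot x)$ vanishes by Lemma \ref{lem:gbasic}. For the remaining term, Lemma \ref{lem:gbasic} gives
$$q(f,g\cdot x)=-q(g\cdot f,x),$$
which vanishes because $x\in H_f=(\g\cdot f)^\perp$. Subtracting the two gives the desired orthogonality.

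For part (2), combining the $\CC^*$-scaling (which preserves the cone $X$) with the action of $G$, the density of $G\cdot[x]$ in $\PP X$ implies that $\CC^*\cdot G\cdot x$ is dense, hence open, in $X$. Therefore at any nonzero $\lambda x$ one has $T_{\lambda x}X=\CC x+\g\cdot x$. The orthogonality $q(f-\lambda x,g\cdot x)=0$ is independent of $\lambda$ and follows exactly as in (1). The remaining condition $q(f-\lambda x,x)=0$ reduces to the single scalar equation $\lambda\, q(x,x)=q(f,x)$, which is solvable precisely because $x$ is not isotropic; the unique solution is $\lambda=q(f,x)/q(x,x)$.

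The only real technical point — and it is hardly an obstacle — is the tangent space identification, which rests on the standard fact that a dense orbit of an algebraic group action is open. Once that is in place, the verification is a direct use of Lemma \ref{lem:gbasic} together with the definition of $H_f$; no further case analysis or computation is needed.
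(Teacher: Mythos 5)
Your proof is correct and follows essentially the same route as the paper: identify the full tangent space $T_xX$ with $\g\cdot x$ (plus $\CC x$ in the cone case) using density, then run the chain of orthogonality equalities from the proof of Theorem~\ref{thm:main} in reverse, with the scalar $\lambda=q(f,x)/q(x,x)$ handling the extra cone direction. The only difference is one of exposition — you spell out the standard fact that a dense orbit is open (hence $T_xX=\g\cdot x$), where the paper simply asserts the equality "by assumption," and you observe directly that $q(f-\lambda x,g\cdot x)$ is independent of $\lambda$ rather than replacing $x$ by $\lambda x$ and re-running part (1); both are equivalent.
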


\begin{proof} We have the equality $\g\cdot x = T_xX$ by assumption, and with this equality
all the steps of the proof of Theorem \ref{thm:main} are invertible. This proves (1).
The assumption of (2) implies that $\g\cdot x +\langle x\rangle= T_xX$.
Since $x$ is not isotropic there is $\lambda$ such that 
since $q(\lambda x,\lambda x-f)=0$, namely $\lambda=\frac{q(x,f)}{q(x,x)}$, so that orthogonality is guaranteed on the subspace $\langle x\rangle\subset T_xX$.
To check orthogonality on the remaining part of $T_xX$ we may replace $x$ with $\frac{q(x,f)}{q(x,x)} x$ and the same argument in (1) works.
\end{proof}
A stronger converse form will be proved for tensors, see Theorems \ref{thm:conversetensors} and
\ref{thm:conversetensors2} and for Grassmann varieties, see Theorem \ref{thm:conversetensorsgras}.
In Theorem \ref{thm:flagcomplete} we will compute the Euclidean Distance degree ($\mathrm{EDdegree}$) of a complete flag variety
with respect to the Frobenius product.

We recall that $\mathrm{EDdegree}(X)$ (introduced in \cite{DHOST} by following an idea by Bernd Sturmfels) is the number of critical points of $d_f$ restricted to $X$ for general $f$.
In many cases of interest it happens that $H_f\cap X$ is finite and reduced for general $f$, in these cases its cardinality
counts  $\mathrm{EDdegree}(X)$, see eq. (\ref{eq:binary}) and Theorem \ref{thm:flagcomplete} .

The critical space was introduced for tensors in \cite{OP} and for partially symmetric tensors in \cite{DOT}.
In Corollary \ref{cor:dot} we get an alternative proof of the fact proved in \cite{DOT}
by Draisma, Tocino and the author that any best rank $q$ approximation of a partially symmetric tensor $f$ lies in the critical space.

Our approach is somehow dual to the one in \cite{DLOT, BD}, where $\mathrm{EDdegree}$ was considered 
in an orthogonally invariant setting, but certain subvarieties of $X$ were constructed
in order to cut transversally the orbits.
\section{Symmetric and partially symmetric tensors}
\subsection{Symmetric tensors}\label{subsec:st}
Let $W$ a space of dimension $n+1$ and $V=\sym^d W$. We assume that $W$ is equipped with a nondegenerate quadratic form $q_W$ and we choose coordinates in $W$ such that $q_W=\sum_{i=0}^nx_i^2$.
There is a unique nondegenerate bilinear form $q$ such that 
\begin{equation}\label{eq:frob}q(x^d,y^d)=q_W(x,y)^d\quad\forall x, y\in W,\end{equation} which is called the Frobenius  (or Bombieri-Weyl) form. 
Since every polynomial in $\sym^dW$ can be written as a sum of powers of linear forms,
it is enough to ask (\ref{eq:frob}) for any power $x^d$, $y^d$.
The group 
$G=SO(W,q_W)$ acts over $V$ by the analogous rule $g\cdot (x^d)=(g\cdot x)^d$. We get the inclusion $G\subset SO(V,q)$, so that 
we are in the setting of \S 1; our aim is to apply Theorem \ref{thm:main}.
The Frobenius form has the coordinate expression
$q\left(\sum_{\alpha}{d\choose \alpha}f_\alpha x^\alpha, \sum_{\alpha}{d\choose \alpha}g_\alpha x^\alpha\right) =
\sum_{\alpha}{d\choose \alpha}f_\alpha g_\alpha$
which, up to a scalar factor, has the nice M2 \cite{GS} implementation
\begin{verbatim}
diff(f,g)
\end{verbatim}
Note that $SL(W)\cap SO(V)=SO(W)$, but we will not need this fact. The monomials are orthogonal but not orthonormal 
with respect to $q$.
\begin{Proposition}\label{prop:span}
\begin{equation}\label{eq:sodij}\mathfrak{so}(W)\cdot f=\langle \frac{\partial f}{\partial x_i}x_j-\frac{\partial f}{\partial x_j}x_i\rangle_{0\le i< j\le n}\end{equation}
\end{Proposition}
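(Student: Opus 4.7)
The plan is to pick an explicit basis of the Lie algebra $\mathfrak{so}(W,q_W)$ and compute its action on $f\in\sym^d W$ element by element. First I would record how $\mathfrak{so}(W)$ acts on $\sym^d W$ as derivations: differentiating the relation $g\cdot(\ell_1\cdots\ell_d)=(g\ell_1)\cdots(g\ell_d)$ at $g=e$ and extending by linearity, one finds that for any $A\in\mathfrak{gl}(W)$, identifying $f\in\sym^d W$ with a homogeneous polynomial of degree $d$ in $x_0,\ldots,x_n$,
$$A\cdot f \;=\; \sum_{k=0}^n (A\cdot x_k)\,\frac{\partial f}{\partial x_k}.$$

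Next I would identify the standard basis of $\mathfrak{so}(W,q_W)$. In the chosen coordinates where $q_W=\sum x_i^2$, the subalgebra $\mathfrak{so}(W)\subset \mathfrak{gl}(W)$ consists of skew-symmetric matrices, with basis $M_{ij}:=E_{ij}-E_{ji}$ for $0\le i<j\le n$. As an endomorphism of $W$, the element $M_{ij}$ sends $x_j\mapsto x_i$, $x_i\mapsto -x_j$, and $x_k\mapsto 0$ for $k\notin\{i,j\}$. Substituting into the derivation formula yields
$$M_{ij}\cdot f \;=\; x_i\,\frac{\partial f}{\partial x_j} \;-\; x_j\,\frac{\partial f}{\partial x_i},$$
which is precisely $-\!\left(\frac{\partial f}{\partial x_i}\,x_j-\frac{\partial f}{\partial x_j}\,x_i\right)$.

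Since $\{M_{ij}\}_{i<j}$ is a basis of $\mathfrak{so}(W)$, the subspace $\mathfrak{so}(W)\cdot f$ is spanned by the vectors $M_{ij}\cdot f$, which up to an overall sign (immaterial for the span) are exactly the generators listed on the right hand side of (\ref{eq:sodij}); conversely each of those generators equals $-M_{ij}\cdot f$ and hence lies in $\mathfrak{so}(W)\cdot f$. There is no serious obstacle here: once the derivation formula is in place, the entire content is bookkeeping with the sign and index conventions for the skew-symmetric basis.
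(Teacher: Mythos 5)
Your proof is correct and follows essentially the same route as the paper: you exponentiate a skew-symmetric matrix to get a path in the orbit, differentiate to obtain the derivation action $A\cdot f=\sum_k (A\cdot x_k)\,\partial f/\partial x_k$, and then plug in the standard basis $E_{ij}-E_{ji}$ of $\mathfrak{so}(W)$ to read off the generators (up to the harmless overall sign you correctly flag). The paper compresses these two steps into one by writing the path $f(e^{tA}x)$ and differentiating it directly, but the content is identical.
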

\begin{proof}
It is convenient to denote
\begin{equation}\label{eq:dij}D_{ij}(f)= \frac{\partial f}{\partial x_i}x_j-\frac{\partial f}{\partial x_j}x_i
\end{equation}

For any skew-symmetric matrix $A$ we have that $e^A$ is orthogonal.
Then $f(e^{tA}x)$ is a path in the $SO$-orbit of $f$.
By taking the derivative at $t=0$ we get
$\sum_{p=0}^n\frac{\partial f}{\partial x_p}\left(Ax\right)_p\in \mathfrak{so}(W)\cdot f$
By taking $A=e_{ij}-e_{ji}$ we get exactly $D_{ij}f$ and these elements span $\mathfrak{so}(W)\cdot{f}$.
\end{proof}

The rank one tensors in $\sym^dW$ have the form $x^d$ and make a cone over the Veronese variety  $v_d\PP W$, where the origin
has been removed from the cone. We recall that the eigenvectors
of $f\in\sym^dW$ are the critical points of the function $d_f(x^d)=q(f-x^d)$ restricted to the rank one tensors \cite{Q, QZ, L}. 
In this paper we are interested in the condition $x^d\in H_f$, which does not distinguish between
$x$ and its scalar multiples, so by abuse of notation we may shift to projective space $\PP W$
and denote by the same symbol the point $x\in\PP W$.
The eigenvectors correspond to the non isotropic $x$ (i.e. $q(x)\neq 0$) such that $\nabla f(x)=x$ in $\PP W$,
which means that any representatives of the right and the left hand side differ by a nonzero scalar multiple.
The connection with (\ref{eq:sodij}) and (\ref{eq:dij}) is that the eigenvectors of $f$ make the base locus of the linear
system $\langle D_{ij}f\rangle$.

It follows from Theorem \ref{thm:main} that the eigenvectors of $f$ lie in $H_f$ (which
is obvious from the above description since $D_{ij}f$ are the minors of the matrix $\begin{pmatrix}\nabla f\\ x\end{pmatrix}$ ) and moreover the critical points of $d_f$
on the secant varieties of $d$-Veronese variety lie in $H_f$, which is not obvious from the definition and it was proved first in \cite[Theorem 1.1]{DOT}.
We will state more precisely this claim in the more general setting of partially symmetric tensors in Corollary \ref{cor:dot}.

We give now a more precise converse to Theorem \ref{thm:main} (1) in the case when
$X$ is the cone of symmetric tensors of rank one.

\begin{Theorem}\label{thm:conversetensors}
 For general $f\in\sym^dW$,
$H_f\cap v_d\PP W$ consists exactly of the critical points of $d_f$ restricted to $v_d\PP W$,
namely of the eigenvectors of $f$.
\end{Theorem}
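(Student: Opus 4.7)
The inclusion ``eigenvectors $\subseteq H_f \cap v_d\PP W$'' is an immediate application of Theorem~\ref{thm:main}(1) to $G=SO(W)$ and $X = v_d\PP W$, so the content of the statement lies in the reverse inclusion. My plan is to first identify $H_f \cap v_d \PP W$, via apolarity, with the projective base locus of the linear system $\langle D_{ij}(f)\rangle$ on $\PP W$, and then to eliminate the possibly extraneous isotropic points of that locus for generic $f$ by a dimension count on an incidence variety.

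For the identification I will use two ingredients. From (\ref{eq:frob}) and the fact that $d$-th powers span $\sym^d W$, one derives the apolarity identity $q(x^d, g) = g(x)$ for every $g \in \sym^d W$: indeed for $g = y^d$ both sides agree via $q(x^d, y^d) = q_W(x,y)^d = y^d(x)$, and the general case follows by linearity. Coupled with Proposition~\ref{prop:span}, the condition $x^d \in H_f = (\mathfrak{so}(W)\cdot f)^\perp$ unfolds to $D_{ij}(f)(x) = 0$ for every $i<j$, equivalently $\nabla f(x) \parallel x$ in $W$. The non-isotropic solutions are, by definition, the eigenvectors of $f$; the task thus reduces to showing that for general $f$ no isotropic $[x]$ lies in this common zero locus. (Alternatively, for the non-isotropic case one could invoke Theorem~\ref{thm:mainconverse}(2), using that $SO(W)$ acts with dense orbit on $\PP W$.)

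For this last step I would form the incidence
\begin{equation*}
I := \{(f,[x]) \in \sym^d W \times Q : \nabla f(x) \in \langle x \rangle\},
\end{equation*}
where $Q = \{q_W=0\}\subset \PP W$ has dimension $n-1$. For fixed $[x]\neq 0$, the evaluation map $\sym^d W \to W/\langle x\rangle$ sending $f \mapsto \nabla f(x) \bmod \langle x\rangle$ is surjective (easily verified on monomials), so the fiber of $I$ over $[x]$ has codimension exactly $n$ in $\sym^d W$. Hence $\dim I = \binom{n+d}{d}-n + (n-1) = \binom{n+d}{d} - 1 < \dim \sym^d W$, so the first projection is not dominant, and a general $f$ has empty isotropic locus in its base locus. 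The main obstacle is pinning down the codimension-$n$ claim at fixed $[x]$; once that is established, the dimension bookkeeping completes the argument.
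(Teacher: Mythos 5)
Your proof is correct, and the first half (apolarity identity $q(v^d,g)=g(v)$, hence $v^d\in H_f$ iff all $D_{ij}(f)$ vanish at $v$, iff $\nabla f(v)\parallel v$) coincides with the paper's identification of $H_f\cap v_d\PP W$ with the base locus of $\langle D_{ij}(f)\rangle$, i.e.\ the rank-one locus of $\begin{pmatrix}\nabla f\\ x\end{pmatrix}$. Where you diverge is the final genericity step: to rule out isotropic $v$ in the base locus, the paper simply cites \cite[Lemma 4.2]{DLOT}, whereas you give a self-contained incidence-variety argument. Your argument is complete as written: the surjectivity of $f\mapsto\nabla f(x)\bmod\langle x\rangle$ at a fixed $[x]$ is already established by the monomial check, which fixes the fiber codimension at $n$, so the ``main obstacle'' you flag at the end is in fact already disposed of earlier in your own text; the bookkeeping $\dim I=\dim\sym^dW-n+(n-1)<\dim\sym^dW$ then shows the first projection is not dominant, which is exactly the required statement that a general $f$ has no isotropic $v$ with $\nabla f(v)\parallel v$. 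The two approaches buy different things: citing \cite{DLOT} keeps the proof short, while your direct dimension count is more transparent and in particular makes explicit the elementary linear-algebraic content (surjectivity of the evaluation map) behind the avoidance of the isotropic quadric for generic $f$.
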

\begin{proof} Let $v^d\in H_f\cap v_d\PP W$. In particular $q(v^d,g\cdot f)=0$ for any $g\in\g$,
which implies that $D_{ij}f$ vanishes at $v$.
This is equivalent to the matrix
$$\begin{pmatrix}\nabla f\\ x\end{pmatrix}$$
having rank one at $v$, which is the condition that $v$ is eigenvector of $f$, if $v$ is not isotropic.
By \cite[Lemma 4.2]{DLOT} the critical points of $d_f$ for a general $f$ avoid any proper closed subset of $v_d\PP W$,
so for general $f$ it is guaranteed that no isotropic $v$ is found.

\end{proof}

\begin{Remark} Note that for even $d$, $\g\cdot(f+cq^{d/2})=\g\cdot f + [q^{d/2}]$ for any $c\in\C\setminus\{0\}$.
Conversely, if $\g\cdot f=\g\cdot h$ for general $f, h$ then we get $H_f=H_h$, so that $f$, $h$ have the same eigenvectors and Turatti proves in \cite{Tur}
(generalizing previous results from \cite{BGV, ASS})
that there exists $c\in\C$ such that $f+cq^{d/2}=h$.
\end{Remark}
\subsection{Partially symmetric tensors}\label{subsec:pst}
Consider the tensor product $\sym^{d_1}V_1\otimes\ldots\otimes\sym^{d_k}V_k=V$.
We assume we have nondegenerate symmetric bilinear forms $q_i$ on $V_i$.
$V$ is equipped with the Frobenius form $q$ such that on decomposable elements
$$q(v_{1}^{d_1}\otimes\ldots\otimes v_k^{d_k}, w_1^{d_1}\otimes\ldots\otimes w_k^{d_k}) = \prod_{i=1}^k q_i(v_i,w_i)^{d_i}.$$
The decomposable elements make a cone over the Segre-Veronese variety $X\simeq \PP V_1\times\ldots\times\PP V_k$ embedded in $\PP V$ with the line bundle $\O(d_1,\ldots, d_k)$. The group $G=SO(V_1,q_1)\times\ldots\times SO(V_k,q_k)$ acts on $V$, we have again the inclusion
$G\subset SO(V,q)$
 and Theorem \ref{thm:main} applies.
Denote by $x_{i,0}\ldots x_{i,n_i}$ an orthogonal coordinate system on $V_i$. Analogously to Proposition \ref{prop:span} 
the orbit $\mathfrak{so}(V_1)\times\ldots\times\mathfrak{so}(V_k)\cdot f$
is spanned by $\frac{\partial f}{\partial x_{p,i}}x_{p,j}-\frac{\partial f}{\partial x_{p,j}}x_{p,i}$ for $0\le i< j\le n_p$, $p=1,\ldots, k$.
It follows that the critical space $H_f$ defined according to (\ref{eq:defcrit}) coincides with the one defined in \cite{DOT}. 

The critical points of $d_f(x)=q(f-x)$ restricted to the Segre-Veronese variety are the singular t-ples of $f$ \cite{L},
their number is called $\mathrm{EDdegree}$ in \cite{DHOST} and it is counted by the formula in \cite{FO}, see also \cite[\S 8]{DHOST}.

The proof of Theorem \ref{thm:conversetensors} generalizes to this setting and gives

\begin{Theorem}\label{thm:conversetensors2}
 For general $f\in\sym^{d_1}V_1\otimes\ldots\otimes\sym^{d_k}V_k=V$,
 let $X\subset\PP V$ be the Segre-Veronese variety of rank one tensors.
$H_f\cap X$ consists exactly of the singular $t$-ples of $f$. 
\end{Theorem}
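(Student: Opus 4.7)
The plan is to mirror the argument of Theorem \ref{thm:conversetensors}, using the multiplicativity of the Frobenius form on decomposable tensors. The inclusion of singular $t$-ples in $H_f \cap X$ is immediate from Theorem \ref{thm:main}(1) applied to $G = SO(V_1) \times \cdots \times SO(V_k)$, so the real content is the reverse inclusion.

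To carry out the reverse inclusion, I would take $v = v_1^{d_1} \otimes \cdots \otimes v_k^{d_k} \in H_f \cap X$. Using the spanning description of $\mathfrak{g}\cdot f$ recalled just above the statement, $v \in H_f$ is equivalent to
$$q\!\left(v,\ \frac{\partial f}{\partial x_{p,i}} x_{p,j} - \frac{\partial f}{\partial x_{p,j}} x_{p,i}\right) = 0$$
for every $p \in \{1,\dots,k\}$ and every $0 \le i < j \le n_p$.

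Next, for each $p$ I would introduce the partial contraction $\tilde f_p \in \sym^{d_p} V_p$ obtained from $f$ by pairing every slot $r \neq p$ against $v_r^{d_r}$ via $q_r$. Because $q$ factors as a tensor product of the $q_r$ on decomposables, the displayed equations become exactly $q_p(v_p^{d_p},\, D_{ij}(\tilde f_p))=0$ for all $i<j$, i.e.\ $v_p^{d_p} \in H_{\tilde f_p}$ inside $\sym^{d_p} V_p$. By Theorem \ref{thm:conversetensors} applied to $\tilde f_p$, provided $\tilde f_p$ is sufficiently generic and $v_p$ is not $q_p$-isotropic, this forces $v_p$ to be an eigenvector of $\tilde f_p$. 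Running this simultaneously over all $p$ yields exactly the system of equations characterizing $v$ as a singular $t$-ple of $f$ in the sense of \cite{L}.

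The step I expect to be the main obstacle is the genericity argument. It requires two things: that for generic $f$ each partial contraction $\tilde f_p$ is itself generic enough in $\sym^{d_p} V_p$ for Theorem \ref{thm:conversetensors} to apply, and that no factor $v_r$ arising from a point of $H_f \cap X$ is $q_r$-isotropic. Both should follow from a direct analog of \cite[Lemma 4.2]{DLOT}: for generic $f$, the finite set $H_f \cap X$ avoids any prescribed proper closed subvariety of $X$, and in particular avoids the isotropic locus in each factor. Once this analog is in hand, the remainder of the proof is formal.
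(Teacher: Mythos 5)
Your reduction to the partial contractions $\tilde f_p$ is correct and is exactly the right way to generalize the Veronese argument; the identity $q(v,\ D^{(p)}_{ij}f) = q_p(v_p^{d_p}, D_{ij}(\tilde f_p))$ does hold because the Frobenius form factors across the tensor slots, and the resulting eigenvector conditions on each $\tilde f_p$ are precisely the singular-$t$-ple equations of \cite{L}. So the skeleton matches the (unstated) proof the paper has in mind when it says ``the proof of Theorem~\ref{thm:conversetensors} generalizes.''

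The flaw is in how you invoke Theorem~\ref{thm:conversetensors}. You use it as a black box applied to $\tilde f_p$, which forces you to require $\tilde f_p$ to be a \emph{generic} element of $\sym^{d_p}V_p$. That requirement cannot be met and, worse, cannot even be formulated cleanly: $\tilde f_p$ is not a function of $f$ alone but of $f$ together with the unknown points $v_r$ ($r\neq p$) you are trying to solve for, so ``generic $f$ implies generic $\tilde f_p$'' is circular. No analog of \cite[Lemma 4.2]{DLOT} can deliver it, because that lemma only lets $H_f\cap X$ avoid a \emph{prescribed} proper closed subset of $X$, whereas the bad locus you would need to avoid depends on $f$ itself.

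The repair is to unpack Theorem~\ref{thm:conversetensors}'s proof rather than cite its statement. The direction you actually need is purely algebraic and holds for \emph{every} $\tilde f_p$: $v_p^{d_p}\in H_{\tilde f_p}$ is equivalent to $D_{ij}(\tilde f_p)(v_p)=0$ for all $i<j$, which is equivalent to the $2\times(n_p+1)$ matrix with rows $\nabla\tilde f_p(v_p)$ and $v_p$ having rank $\le 1$; when $v_p$ is not $q_p$-isotropic this says $v_p$ is an eigenvector of $\tilde f_p$. No genericity of $\tilde f_p$ enters. The only genericity you need is on $f$, and only to rule out isotropic factors: the locus $\{v\in X : \text{some } v_p \text{ is } q_p\text{-isotropic}\}$ is a fixed proper closed subvariety of the Segre--Veronese $X$, so a direct analog of \cite[Lemma 4.2]{DLOT} (or a dimension count on the incidence variety $\{(f,v): v\in H_f\cap X,\ v_p \text{ isotropic for some } p\}$) shows $H_f\cap X$ misses it for general $f$. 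With that substitution your argument closes the gap correctly.
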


Since general partially symmetric tensors $f$ have trivial isotropic groups,
in the binary case $X_{\mathbf{d}}=\PP^1\times\ldots\times\PP^1$
 embedded in $\PP(\sym^{d_1}\C^2\otimes\ldots\sym^{d_k}\C^2)$ with the line bundle
 $\O(d_1,\ldots, d_k)$ we have $G=(\C^*)^k$,  $\g=\C^k$ and  the nice coincidence $\mathrm{codim}H_f=k=\dim X_{\mathbf{d}}$. Hence the cardinality of the intersection between $H_f$ and $X_{\mathbf{d}}$ can be counted
 by Bezout Theorem and it follows an alternative proof of the formula
\begin{equation}\label{eq:binary}
\mathrm{EDdegree}(X_{\mathbf{d}})=\deg X_{\mathbf{d}}=k!d_1\ldots d_k,
\end{equation}
already known from \cite{FO}, \cite[Eq. (1.6)]{Sod}. Our approach explains that the resulting equality between 
$\mathrm{EDdegree}$ and $\deg$ of $X_{\mathbf{d}}$ is not a coincidence. We will apply again this approac to complete flag varieties in
Theorem \ref{thm:flagcomplete}.

\begin{Example}
If $\dim A=\dim B=\dim C=2$ we denote by $Q_A$, (resp. $Q_B$, $Q_C$ ) the isotropic quadric
consisting of two points on $\PP(A)$ (resp. $\PP(B)$, $\PP(C)$).

We have that $\dim(\mathfrak{so}\cdot f)<3$ if and only if $f$ belongs to one of the following six $\PP^3$ linearly embedded in $\PP( A\otimes B\otimes C)$ (each item consists of two $\PP^3$'s)
$$Q_A\otimes B\otimes C,\quad A\otimes Q_B\otimes C,\quad A\otimes B\otimes Q_C.$$
\end{Example}

The following result was proved in \cite{DOT}, joint with J. Draisma and A. Tocino. The proof given here,
as a consequence of Theorem \ref{thm:main}, is maybe simpler.

\begin{Corollary}\label{cor:dot}\cite[Theorem 1.1]{DOT} Let $X_q$ be the q-secant variety to the Segre-Veronese variety in 
$\PP\left(\sym^{d_1}V_1\otimes\ldots\otimes\sym^{d_k}V_k\right)$. Then the critical points of the distance function
from a tensor $f$ to $X_q$ lie in $H_f$. In particular any best rank $q$ approximation of $f$ (when it exists) lie in $H_f$.
\end{Corollary}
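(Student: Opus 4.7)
The plan is to reduce the statement to a direct application of Theorem \ref{thm:main}(1) by verifying that the $q$-secant variety $X_q$ lies in the setting of that theorem, i.e.\ that it is $G$-invariant for $G=SO(V_1,q_1)\times\ldots\times SO(V_k,q_k)$ acting on $V=\sym^{d_1}V_1\otimes\ldots\otimes\sym^{d_k}V_k$.

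First I would observe that the Segre-Veronese variety of rank one tensors is manifestly $G$-invariant: the action of $G$ respects the factorization $v_1^{d_1}\otimes\ldots\otimes v_k^{d_k}\mapsto (g_1v_1)^{d_1}\otimes\ldots\otimes(g_kv_k)^{d_k}$, so it sends rank one tensors to rank one tensors. Because $G$ acts linearly on $V$, it sends sums of $q$ rank one tensors to sums of $q$ rank one tensors, hence preserves the affine cone over $X_q$ (which, as a Zariski closure, is preserved by any linear automorphism stabilizing the locus of rank $\le q$ tensors). Thus $X_q$ satisfies the hypothesis of Theorem \ref{thm:main}.

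Next I would simply invoke Theorem \ref{thm:main}(1) applied to $X=X_q$: any critical point of $d_f$ on $X_q$ lies in the critical space $H_f$. Note that $H_f$ is defined intrinsically in terms of $\g\cdot f$ and does not depend on which $G$-invariant variety we consider, so this is the same $H_f$ as the one appearing in the statement (and in \cite{DOT}, as already noted in \S\ref{subsec:pst}).

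For the second assertion, a best rank $q$ approximation $x$ of $f$ is, by definition, a closest real point to $f$ in the set of tensors of rank at most $q$, which is contained in $(X_q)_\RR$. Whenever such an $x$ exists it is in particular a local minimum of $d_f$ on $X_q$ restricted to $\RR$, hence a critical point of $d_f|_{X_q}$; applying part (1) of the corollary (equivalently, Theorem \ref{thm:main}(2)) then places $x$ in $H_f$. The only mild subtlety is that the set of rank $\le q$ tensors may fail to be closed, but this does not matter: a best rank $q$ approximation is by hypothesis a point of this set, and smoothness at that point is not needed because $X_q\supset\{\text{rank}\le q\}$ and any local minimum on the subset is also a critical point on the enclosing variety $X_q$ at a point where $X_q$ is smooth (otherwise the critical-point condition is read at a smooth point nearby, cf.\ the analogous discussion in \cite{DOT}). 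Thus no new obstacle arises; the main content is the $G$-invariance of $X_q$, which is essentially a tautology given that $G$ acts linearly on $V$ and preserves the Segre-Veronese.
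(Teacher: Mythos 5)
Your proposal is correct and takes the same route the paper has in mind. The paper offers no detailed proof here — it simply remarks that the corollary is a consequence of Theorem~\ref{thm:main} — so the only content to supply is exactly what you supplied: the $q$-secant variety $X_q$ is $G$-invariant because $G$ acts linearly on $V$ and preserves the Segre--Veronese variety of rank-one tensors, hence preserves sums of $q$ of them and their Zariski closure; then Theorem~\ref{thm:main}(1) and (2) give the two assertions. The identification of $H_f$ with the critical space of \cite{DOT} is already handled in \S\ref{subsec:pst} (via the spanning set $\frac{\partial f}{\partial x_{p,i}}x_{p,j}-\frac{\partial f}{\partial x_{p,j}}x_{p,i}$), which you correctly invoke.

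One small imprecision worth flagging: you assert that ``any local minimum on the subset is also a critical point on the enclosing variety $X_q$,'' which is false as stated (a local minimum on a positive-codimension subset of $X_q$ would not be). What actually makes the ``in particular'' part go through is that a best rank-$q$ approximation lies in a locus that is full-dimensional inside $X_q$ (the rank-$\le q$ tensors fill out a dense subset of the $q$-secant variety), so a local minimum there is a local minimum on a full-dimensional piece of $(X_q)_{\RR}$ and hence a critical point of $d_f|_{X_q}$ at smooth points. This is precisely the technical point treated carefully in \cite{DOT}, and the paper itself elides it, so your proof is at the same level of rigor as the original; but the sentence as written overclaims and should be rephrased to mention full dimensionality or density.
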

\section{Grassmann and Flag varieties}
\subsection{Grassmann varieties}\label{subsec:grass}

Let $V=\wedge^kW$, we consider  the Grassmann variety $Gr(k,W)$ of $k$-dimensional subspaces of $W$,
its cone is embedded in $V$. Again, a nondegenerate quadratic form $q_W$ on $W$ extends to the Frobenius form $q$ on $V$
by requiring $q(v_{1}\wedge\ldots\wedge v_k, w_1\wedge\ldots\wedge w_k)=\det\left( q_W(v_i, w_j)\right)$ (the Gram determinant).
If $v=v_1\wedge\ldots\wedge v_k\in\wedge^k W$ then the derivative $ \frac{\partial v}{\partial x_i}\in\wedge^{k-1}W$ is defined by the Leibniz formula
$$ \frac{\partial v}{\partial x_i} = \sum_{j=1}^k v_1\wedge\ldots \wedge  \frac{\partial v_j}{\partial x_i}\wedge\ldots\wedge v_d$$
and extended by linearity to all $\wedge^k W$ . 
This is compatible with the inclusion $\wedge^kW\subset W^{\otimes k}$ and the form $q$ just defined is the restriction
of the Frobenius form on $W^{\otimes k}$ of the previous section.The same formula (\ref{eq:sodij}) holds formally in case $SO(W)$ acts on $\wedge^k W$ .
\begin{equation}\label{eq:sodijskew}\mathfrak{so}(W)\cdot f=\langle \frac{\partial f}{\partial x_i}\wedge x_j-\frac{\partial f}{\partial x_j}\wedge x_i\rangle_{0\le i< j\le n}\end{equation}

The EDdegree of Grassmann varieties with respect to the Frobenius form is still unknown in general. 

For a general $f\in\wedge^kW$, we have that a non isotropic $v=v_1\wedge\ldots\wedge v_k$ is a critical point for
$d_f$ if $T(v_1\wedge\ldots\widehat{v_i}\ldots\wedge v_k)=q(v_i,-)$ $\forall i=1,\ldots k$.
Again, the proof of Theorem \ref{thm:conversetensors} generalizes to this setting and gives

\begin{Theorem}\label{thm:conversetensorsgras}
 For general $f\in\wedge^kW$,
$H_f\cap Gr(k,W)$ consists exactly of the critical points of $d_f$ restricted to the Grassmann variety $Gr(k,W)$. 
\end{Theorem}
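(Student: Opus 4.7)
The scheme of proof mirrors that of Theorem \ref{thm:conversetensors}. One inclusion is immediate: by Theorem \ref{thm:main}(1) applied to the cone $\widehat{Gr(k,W)} \subset \wedge^k W$ under the action of $G = SO(W)$, every critical point of $d_f$ lies in $H_f$.

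For the converse, fix $v = v_1\wedge\cdots\wedge v_k \in H_f \cap Gr(k,W)$ and assume first that $v$ is non-isotropic, $q(v,v) \neq 0$. Using formula (\ref{eq:sodijskew}), the condition $v \in H_f$ amounts to
$$q\!\left(v,\ \frac{\partial f}{\partial x_i}\wedge x_j - \frac{\partial f}{\partial x_j}\wedge x_i\right) = 0\quad\text{for all }0\le i<j\le n.$$
I would choose a basis of $W$ in which $v_1,\ldots,v_k$ are orthogonal and complete it to an orthogonal basis of $W$, then expand each scalar product above via the Gram-determinant definition of $q$ on $\wedge^k W$ together with the adjointness $q(a\wedge x_j,\ b) = q(a,\ \partial_j b)$. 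In this adapted basis almost all Gram entries collapse by orthogonality, and the antisymmetric system turns into the assertion that, for each $i = 1,\ldots,k$, the linear form on $W$ given by $w \mapsto q(f,\ w\wedge v_1\wedge\cdots\widehat{v_i}\cdots\wedge v_k)$ is proportional to $q(v_i,\cdot)$. This plays the role of the rank-one condition on the matrix $\bigl(\begin{smallmatrix}\nabla f\\ x\end{smallmatrix}\bigr)$ in the symmetric case, and it is exactly the criticality condition $q(f-\lambda v,\ w\wedge v_1\wedge\cdots\widehat{v_i}\cdots\wedge v_k) = 0$ recalled before the theorem, once $\lambda = q(f,v)/q(v,v)$ is used to fix the scalar multiple of $v$ along the radial tangent direction.

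The isotropic case is excluded by the Grassmannian analog of \cite[Lemma 4.2]{DLOT}: a standard dimension count on the incidence $\{(f,v) \in V \times Gr(k,W) : v \in H_f,\ q(v,v)=0\}$, combined with the fact that for fixed $v$ the slice $\{f : v \in H_f\}$ is a linear subspace of $V$ of codimension at most $\dim \mathfrak{so}(W)$, shows that the first projection of this incidence to $V = \wedge^k W$ is not dominant. Hence for generic $f$ no isotropic point lies in $H_f \cap Gr(k,W)$.

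The main technical obstacle is the middle step: extracting the clean one-index proportionality characterizing criticality from the two-index antisymmetric system defining $H_f$. Once the basis adapted to $v$ is fixed, the Gram determinant collapses almost entirely by orthogonality, and the remaining computation reduces to careful sign-tracking coming from the position of $v_i$ in the wedge and from the adjointness between $\partial_j$ and $x_j\wedge(\cdot)$; this is essentially bookkeeping but is the only place where the argument is not formally identical to the symmetric-tensor one.
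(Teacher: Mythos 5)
Your proposal follows the same route the paper itself takes: the paper's own proof is simply the assertion that the argument for Theorem~\ref{thm:conversetensors} generalizes, and you fill this in by (a) quoting Theorem~\ref{thm:main}(1) for one inclusion, (b) expanding the $H_f$ condition via (\ref{eq:sodijskew}) in a $q_W$-orthonormal basis adapted to $v$ and using the wedge/contraction adjointness to collapse the two-index system $q(v,D_{ij}f)=0$ to the one-index criticality condition stated just before the theorem, and (c) excluding isotropic points of $H_f\cap Gr(k,W)$ for generic $f$ by the analogue of \cite[Lemma 4.2]{DLOT}. That is exactly the intended argument, and your remark that the only real work is the sign/position bookkeeping in (b) is accurate.

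One slip in the isotropic step: you say the fiber $\{f: v\in H_f\}=(\g\cdot v)^\perp$ has codimension \emph{at most} $\dim\mathfrak{so}(W)$. An upper bound on the codimension makes the incidence $\{(f,v): v\in H_f,\ q(v,v)=0\}$ \emph{larger}, so it cannot by itself show the projection to $V$ is non-dominant. What the dimension count actually requires is a lower bound: the fiber has codimension exactly $\dim\g\cdot v$, and one must check, stratum by stratum on the isotropic quadric $Q\subset Gr(k,W)$, that $\dim\g\cdot v$ exceeds the dimension of the corresponding orbit stratum of $Q$. Since $SO(W)$ acts with a dense orbit on $Gr(k,W)$ and $Q$ is a $G$-invariant hypersurface, this works out, but it is the inequality in the opposite direction from the one you wrote that does the job.
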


\subsection{Flag varieties}\label{subsec:flag}

For a flag variety $X=SL(W)/P$, where $P$ is a parabolic subgroup of $SL(W)$\cite[\S 23.3]{FH}, embedded by a very ample line bundle $L$,
$H_f\cap X$ consists exactly of the critical points of $d_f$. 
The embedding space is a Schur module $S^{\alpha}W$ where the Frobenius form
is defined again by restriction of the one on $W^{\otimes k}$ and again we have $G=SO(W)$.

For complete flag varieties $\F_n$, which parametrize complete flags  $(L_1\subset\ldots \subset L_n)\subset W$
with $\dim L_i=i$ (partial flags may miss some $L_i$'s), the above principle becomes effective in computing the number of critical points. We recall that $\dim \F_n=n(n+1)/2$ and that $\F_n=SL(n+1)/B$ where $B$ is the Borel subgroup of upper triangular matrices.
The following two Lemmas are well known, we include the proofs for the convenience of the reader. 
\begin{Lemma}$\chi(\F_n,\ZZ)=(n+1)!$
\end{Lemma}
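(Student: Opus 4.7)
The plan is to invoke the Bruhat decomposition of the flag variety and compute the Euler characteristic by counting the cells, since each cell is an affine space and contributes $1$ to the Euler characteristic.

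More concretely, I would first recall that for $G=SL(n+1)$ with Borel subgroup $B$ and Weyl group $W=S_{n+1}$, one has the Bruhat decomposition
\begin{equation*}
\F_n = SL(n+1)/B = \bigsqcup_{w \in S_{n+1}} C_w, \qquad C_w := BwB/B,
\end{equation*}
a disjoint union of locally closed subvarieties indexed by the Weyl group. This is a classical fact that can be derived from the row-reduction of a full-rank matrix to a unique representative with pivots in positions determined by a permutation, or equivalently from choosing in each flag $L_1\subset\ldots\subset L_n$ the unique basis adapted to a reference flag.

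Next I would recall that each Bruhat cell is isomorphic to an affine space, $C_w\simeq \CC^{\ell(w)}$, where $\ell(w)$ is the length of $w$; this yields a CW-structure on $\F_n$ whose (complex) cells are precisely the $C_w$, with only even-dimensional real cells so no boundary maps contribute. Consequently
\begin{equation*}
\chi(\F_n,\ZZ)=\sum_{w\in S_{n+1}}\chi(\CC^{\ell(w)})=\sum_{w\in S_{n+1}} 1=|S_{n+1}|=(n+1)!.
\end{equation*}

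There is no serious obstacle: every ingredient (existence of the Bruhat decomposition, cellularity, and the fact that $|S_{n+1}|=(n+1)!$) is standard. An equally short alternative would be to apply the Lefschetz fixed-point theorem to a generic element of the maximal torus $T\subset SL(n+1)$: its fixed points on $\F_n$ are the $(n+1)!$ coordinate flags obtained by permuting a chosen eigenbasis of $W$, and since $\F_n$ has vanishing odd cohomology, the number of fixed points equals $\chi(\F_n)$.
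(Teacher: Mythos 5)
Your proof is correct but takes a genuinely different route from the paper's. The paper computes $\chi(\F_n)$ via Gauss--Bonnet: a generic $A\in SL(n+1)$ with distinct eigenvalues induces a vector field (section of $T\F_n$) whose zero locus is the set of $A$-invariant complete flags, and a direct count of these flags (choose $L_n$ as the span of $n$ of the $n+1$ eigenvectors, then $L_{n-1}$ among those, etc.) gives $(n+1)!$; Gauss--Bonnet identifies this count with $\chi(\F_n)$. You instead use the Bruhat decomposition into affine cells indexed by $S_{n+1}$, each contributing $1$ to the Euler characteristic. Both are classical and correct. Your cellular argument has the side benefit of producing the full Poincar\'e polynomial (the Betti numbers are the $q$-coefficients of $[n+1]_q!$), not just $\chi$, whereas the paper's argument is lighter on machinery and fits the paper's theme of counting zeros of a generic section --- conceptually parallel to the counting of critical points that drives the rest of the paper. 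Your Lefschetz fixed-point alternative is in fact the closest in spirit to what the paper does, since both amount to counting the $(n+1)!$ torus-fixed coordinate flags; the difference is only in which theorem (Poincar\'e--Hopf/Gauss--Bonnet vs.\ Lefschetz) converts that count into $\chi$.
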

\begin{proof}
A general section of the tangent bundle $T\F_n$ is given by a matrix $A\in SL(n+1)=SL(W)$
with distinct eigenvalues and corresponding eigenvectors $v_1,\ldots v_{n+1}$.
The zero locus of this section consists of $A$-invariant complete flags $(L_1\subset\ldots \subset L_n)$
with $\dim L_i=i$.
There are $(n+1)$ choices for $L_n$, obtained by the span of $n$ among the $v_i$.
For each $L_n$ there are correspondingly $n$ choices for $L_{n-1}$, and so on there are $(n+1)!$
choices for each $A$-invariant complete flag. The thesis follows from Gauss-Bonnet Theorem.
\end{proof}
\begin{Lemma}
\begin{itemize}
\item{(i)} Let $\F_n$ be embedded with the line bundle $\O(a_1,\ldots, a_n)$ in 
the projective space over $S^{a_1,\ldots, a_n}\C^{n+1}$, the module with Young diagram having
$\sum_{i=j}^{n}a_i$ boxes in the $j$-th row.
The degree of the embedded variety is
\begin{equation}\label{eq:degreeflag}{{n+1}\choose 2}!\prod_{1\le i<j\le n+1}\frac{a_i+\ldots+a_{j-1}}{j-i}.\end{equation}
\item{(ii)} When $a_i=1$ we get $\deg\F_n={{n+1}\choose 2}!$
\end{itemize}
\end{Lemma}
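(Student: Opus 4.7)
The plan is to compute the degree by reading off the leading coefficient of the Hilbert polynomial of $\F_n$ under the given embedding. Writing $d={{n+1}\choose 2}=\dim\F_n$, for an ample line bundle $L$ on a smooth projective variety of dimension $d$ one has $\deg_L(\F_n)=d!\cdot[t^d]\chi(\F_n,L^{\otimes t})$, so (i) will follow once this polynomial is made explicit.

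First I would use Borel--Weil together with the appropriate higher-cohomology vanishing (either Kodaira's theorem, since $L=\O(a_1,\ldots,a_n)$ is ample when every $a_i>0$, or Bott's theorem on $SL(n+1)/B$) to identify
$$\chi(\F_n,\O(ta_1,\ldots,ta_n))=\dim S^{ta_1,\ldots,ta_n}\C^{n+1},$$
so that the Hilbert polynomial is literally the Weyl character of $SL(n+1)$ evaluated at the dilated dominant weight.

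Next I would invoke the Weyl dimension formula, as in \cite[\S 24.2]{FH}, to write
$$\dim S^{ta_1,\ldots,ta_n}\C^{n+1}=\prod_{1\le i<j\le n+1}\frac{t(a_i+\cdots+a_{j-1})+(j-i)}{j-i},$$
using the translation between the $(a_i)$ and the row lengths $\lambda_k=a_k+\cdots+a_n$ of the Young diagram, which yields $\lambda_i-\lambda_j=a_i+\cdots+a_{j-1}$. This product has exactly $d={{n+1}\choose 2}$ factors, so it is visibly a polynomial in $t$ of degree $d$; extracting $[t^d]$ simply collects the leading coefficient of each factor, namely $(a_i+\cdots+a_{j-1})/(j-i)$. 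Multiplying by $d!$ produces the formula in (i).

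For (ii) one sets every $a_i=1$: each numerator $a_i+\cdots+a_{j-1}$ then equals the denominator $j-i$, the product collapses to $1$, and $\deg\F_n={{n+1}\choose 2}!$ drops out immediately. The only step where there is anything conceptual to check is the Borel--Weil--Bott identification of the Euler characteristic with a Schur module dimension; everything after that is a direct polynomial computation, so I would expect no real obstacle.
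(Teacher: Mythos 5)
Your proof is correct and follows essentially the same route as the paper: identify the Hilbert polynomial of $(\F_n,\O(a_1,\ldots,a_n))$ via the Weyl character (dimension) formula as $\prod_{1\le i<j\le n+1}\frac{t(a_i+\cdots+a_{j-1})+j-i}{j-i}$, then extract the leading coefficient and multiply by $\binom{n+1}{2}!$. The only difference is that you spell out the Borel--Weil plus higher-cohomology-vanishing step needed to equate $\chi$ with $h^0$ with $\dim S^{ta_1,\ldots,ta_n}\C^{n+1}$, which the paper treats as implicit.
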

\begin{proof}
We have $H^0(\F_n,\O(a_1,\ldots, a_n))=\prod_{1\le i<j\le n+1}\frac{a_i+\ldots+a_{j-1}+j-i}{j-i}$
by Weyl character formula (see \cite[eq. (15.17)]{FH}).
Then the Hilbert polynomial is 
$H^0(\F_n,\O(ta_1,\ldots, ta_n))=\prod_{1\le i<j\le n+1}\frac{t(a_i+\ldots+a_{j-1})+j-i}{j-i}$
and computing the leading term we get the thesis.
In case (ii) the Hilbert polynomial simplifies to
$\chi(\F_n,\O(t,\ldots, t))=(t+1)^{{n+1}\choose 2}$.

\end{proof}

\begin{Theorem}\label{thm:flagcomplete}
Let $B\subset SL(n+1)$ be the Borel subgroup of upper triangular matrices.
For a complete flag variety $\F_n=SL(n+1)/B$ , embedded by a very ample
line bundle $L=\O(a_1,\ldots, a_n)$, with respect to the Frobenius form we have that
$\mathrm{EDdegree}\F_n=\deg \F_n$ is given by
(\ref{eq:degreeflag}). 
\end{Theorem}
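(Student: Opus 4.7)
\emph{Plan.} The approach is to combine two ingredients already in place: the characterization, stated at the start of \S\ref{subsec:flag}, that $H_f\cap \F_n$ is precisely the critical point set of $d_f$ on $\F_n$, and a Bezout-style count of $|H_f\cap\F_n|$. This is exactly the mechanism that produced (\ref{eq:binary}) in the binary symmetric case, and the numerical coincidence to exploit here is $\dim SO(n+1)=\binom{n+1}{2}=\dim \F_n$.

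First I would compute $\codim_V H_f=\dim\g\cdot f$ and observe that $\g=\mathfrak{so}(n+1)$ has dimension $\binom{n+1}{2}$. To conclude $\codim_V H_f=\binom{n+1}{2}$ for generic $f$, the stabilizer $G_f\subset SO(n+1)$ must be zero-dimensional for generic $f\in V$. This follows because $SO(n+1,\RR)$ acts transitively on the real complete flag manifold (Iwasawa decomposition), so $SO(n+1,\C)$ has a dense orbit on $\F_n$, and hence the stabilizer of a generic flag, and therefore of a generic $f$ in the embedding Schur module $S^{a_1,\ldots,a_n}\C^{n+1}$, is finite.

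With this in hand, $\PP H_f\subset\PP V$ is a linear subspace of codimension $\binom{n+1}{2}$, complementary in dimension to $\F_n$. A proper intersection then yields $\deg\F_n$ points by Bezout, giving
$$\mathrm{EDdegree}(\F_n)=|H_f\cap\F_n|=\deg\F_n,$$
and plugging in (\ref{eq:degreeflag}) finishes the proof.

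The main obstacle I expect is showing that the intersection is proper and reduced for generic $f$, i.e.\ ruling out positive-dimensional components as well as isotropic points that would fail to be genuine critical points. Properness at a critical point $x$ should follow by a dimension count: since $\g\cdot x=T_x\F_n$ (the generic stabilizer being trivial), one has $T_x(\F_n\cap H_f)=\g\cdot x\cap(\g\cdot f)^\perp$, which is generically zero by a standard Bertini-type argument. The avoidance of isotropic intersection points uses the same device as in the proof of Theorem \ref{thm:conversetensors}, namely that for general $f$ the critical locus avoids any proper $SO(W)$-invariant closed subset, via \cite[Lemma 4.2]{DLOT}.
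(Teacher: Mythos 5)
Your proposal is correct and takes essentially the same route as the paper: the paper's proof is a one-sentence observation that $\codim H_f=\dim\mathfrak{so}(n+1)=\binom{n+1}{2}=\dim\F_n$ for generic $f$, so that the critical locus $H_f\cap\F_n$ is cut by a linear space of complementary dimension and Bezout gives $\deg\F_n$, exactly as in (\ref{eq:binary}). You supply some justifications the paper leaves implicit — the finite generic stabilizer (via Iwasawa, showing $SO(n+1,\C)$ has a dense orbit on $\F_n$), properness/reducedness, and avoidance of isotropic points via \cite[Lemma 4.2]{DLOT} — which are all correct and worth making explicit. The only small imprecision is the phrase ``the stabilizer of a generic flag, and therefore of a generic $f$ in the embedding Schur module'': the finiteness of $G_f$ for generic $f\in V$ does not literally follow from finiteness of the stabilizer of a generic point of $\F_n$ (the former is a point of the ambient representation, not of the variety), though it is an easy independent fact; the Iwasawa argument is really what you need for the hypothesis of Theorem \ref{thm:mainconverse}(2), namely that $G\cdot[x]$ is dense in $\F_n$, which underlies the unproved assertion at the start of \S\ref{subsec:flag} that $H_f\cap\F_n$ is exactly the critical locus.
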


\begin{proof} For general
$f\in H^0(SL(n+1)/B)$, we have again the nice coincidence that the codimension of $H_f$ is equal to the dimension of $\F_n$, 
which is ${{n+1}\choose 2}=\dim SO(n+1)$, so that 
the critical points are cut by a linear space of  complementary dimension.
\end{proof}

\begin{Example} For $n=2$, the flag variety $SL(3)/B$ embedded with $\O(a,b)$ has
$\mathrm{EDdegree}\F_3=\deg \F_3=3ab(a+b)$.
\end{Example}


\addcontentsline{toc}{chapter}{Bibliography} \noindent
\textit {Dipartimento di Matematica e Informatica ``Ulisse Dini'',  University of Florence, viale Morgagni 67/A, I-50134, Florence, Italy\\
\vskip 0.1cm
\noindent giorgio.ottaviani@unifi.it}
\end{document}